\newtheorem{theorem}{Theorem}[section]
\newtheorem{lemma}[theorem]{Lemma}
\renewenvironment{proof}[1][Proof]{\noindent\textbf{#1.} }{\ \rule{0.5em}{0.5em}}
\begin{document}

\title[Conformal vector fields on compact connected homogeneous Finsler manifolds]{Conformal vector fields on compact connected homogeneous Finsler manifolds}
\author{Ming Xu}
\address[Ming Xu] {School of Mathematical Sciences,
Capital Normal University,
Beijing 100048,
P.R. China}
\email{mgmgmgxu@163.com}

\begin{abstract}
Let $(M,F)$ be a compact connected homogeneous non-Riemannian Finsler manifold
with $\dim M>1$. We prove that any conformal vector field on $(M,F)$ is a Killing vector field.
Further more, we prove that $\rho F$ is a homogeneous Finsler metric on $M$ if and only if $\rho$ is a positive constant function.

Mathematics Subject Classification(2010): 53B40, 53C30, 53C60

Keywords: conformal transformation group, conformal vector field, homogeneous Finsler manifold, isometry group, Killing vector field
\end{abstract}

\maketitle

\section{Introduction}
Let $(M,g)$ be a connected Riemannian manifold.
We denote by $C(M,g)$ and $I(M,g)$ the groups of all conformal transformations and all isometries, and by $C_0(M,g)$ and $I_0(M,g)$ their identity components. Roughly speaking,
they are Lie transformation groups (acting smoothly on $M$), and their Lie algebras are the spaces  of all complete conformal vector fields and all complete Killing vector fields respectively \cite{KN1963}\cite{MS1939}.
Notice $C(M,g)$ may have an infinite dimension when $\dim M=2$. The conformality in Riemannian geometry has been studied for many decades \cite{Al1972}\cite{Fe1996}\cite{GK1962}\cite{Le1971}\cite{Ob1971}\cite{YN1959}. Using warped product or other techniques, it is easy to construct non-Killing conformal vector fields, which
reveals $C_0(M,g)\neq I_0(M,g)$ in general. However, when $(M,g)$ is a compact connected homogeneous Riemannian manifold with $\dim M>1$,
$C_0(M,g)\neq I_0(M,g)$ implies the following rigidity.

\begin{theorem} \label{thm-1}
Let $(M,g)$ be a compact connected homogeneous Riemannian manifold with
$\dim M>1$. Suppose that $C_0(M,g)\neq I_0(M,g)$,
then $(M,g)$ is isometric to a sphere with constant curvature.
\end{theorem}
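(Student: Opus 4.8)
The plan is to break into two cases according to whether the identity component $C_0(M,g)$ of the conformal group is compact, using the hypothesis $C_0(M,g)\neq I_0(M,g)$ to exclude the compact case and the theorem of Obata and Lelong-Ferrand to treat the non-compact one.

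Suppose first that $C_0(M,g)$ is compact. Fix a representative metric in the conformal class; for $\phi\in C_0(M,g)$ write $\phi^*g=e^{2u_\phi}g$, where the functions $u_\phi$ satisfy the cocycle identity $u_{\phi\psi}=u_\phi\circ\psi+u_\psi$. Averaging over $C_0(M,g)$ with respect to normalized Haar measure produces $\bar u\in C^\infty(M)$ for which $\tilde g:=e^{2\bar u}g$ is a $C_0(M,g)$-invariant metric in the conformal class of $g$. Since $\tilde g$ is conformal to $g$ we have $C_0(M,\tilde g)=C_0(M,g)$, and since $C_0(M,g)$ acts isometrically on $(M,\tilde g)$ and is connected, $C_0(M,g)=I_0(M,\tilde g)$. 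Now $I_0(M,g)\subseteq C_0(M,g)=I_0(M,\tilde g)$, so $g$ and $\tilde g=e^{2\bar u}g$ are both invariant under the \emph{transitive} group $I_0(M,g)$; comparing the two metrics forces $\bar u$ to be $I_0(M,g)$-invariant, hence constant. Then $I_0(M,\tilde g)=I_0(M,g)$, i.e. $C_0(M,g)=I_0(M,g)$, contradicting the hypothesis. This rules out the compact case, and it is exactly here that homogeneity of $(M,g)$ is used in an essential way.

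Hence $C_0(M,g)$ is non-compact, and by the theorem of Obata and Lelong-Ferrand \cite{Ob1971}\cite{Le1971} (see also \cite{Al1972}) a compact Riemannian manifold whose conformal group is non-compact is conformally diffeomorphic to the round sphere $(S^n,g_0)$, $n=\dim M\ge2$. It remains to upgrade this to an isometry. Identify $M$ with $S^n$; then $I_0(M,g)$ is a compact connected subgroup of $\mathrm{Conf}(S^n,g_0)$, whose identity component is isomorphic to $\SO_0(n+1,1)$ with maximal compact subgroup $\SO(n+1)=I_0(S^n,g_0)$, and $I_0(M,g)$ acts transitively on $S^n$. Since every compact subgroup of $\SO_0(n+1,1)$ is conjugate, by an element $\phi$ of $\mathrm{Conf}(S^n,g_0)$, into $\SO(n+1)$, the group $H:=\phi\,I_0(M,g)\,\phi^{-1}\subseteq\SO(n+1)$ still acts transitively on $S^n$, while $h:=(\phi^{-1})^*g$ is $H$-invariant and conformal to $g_0$. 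As $g_0$ is also $H$-invariant, the conformal factor relating $h$ to $g_0$ is an $H$-invariant function, hence constant by transitivity; thus $h=c\,g_0$ and $\phi\colon(M,g)\to(S^n,c\,g_0)$ is an isometry onto a sphere of constant curvature.

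The elementary ingredients — the cocycle identity and averaging of conformal factors, and the fact that a transitive group forces an invariant function to be constant — are routine. The substantive external inputs are the Obata–Lelong-Ferrand classification of compact Riemannian manifolds with non-compact conformal group and the conjugacy of compact subgroups of $\SO_0(n+1,1)$ into its maximal compact subgroup; the former is the heaviest black box, while the main conceptual obstacle is really to see that once one knows $(M,g)$ is conformally a sphere, the homogeneity assumption together with this conjugacy is enough to promote the statement to ``isometric to a sphere with constant curvature''.
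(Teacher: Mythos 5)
Your proof is correct, but it follows a genuinely different route from the paper's. The paper does not give a self-contained argument: for $\dim M\geq 3$ it simply quotes Lichnerowicz's theorem (and Goldberg--Kobayashi for $\dim M>3$) to conclude that a compact homogeneous manifold with $C_0\neq I_0$ is a round sphere, and for $\dim M=2$ it runs a classification of compact connected homogeneous surfaces (round $S^2$, round $\mathbb{R}\mathrm{P}^2$, flat $T^2$) in Section \ref{subsection-3-2} and computes $C_0(M,g)$ in each case. You instead set up a dichotomy on the compactness of $C_0(M,g)$: in the compact case the cocycle identity and Haar averaging produce a $C_0$-invariant metric in the conformal class, and homogeneity forces the averaged conformal factor to be constant, contradicting $C_0\neq I_0$; in the non-compact case you invoke the Obata--Lelong-Ferrand theorem to get a conformal diffeomorphism to the round sphere, and then conjugate the transitive compact group $I_0(M,g)$ into the maximal compact $\mathrm{SO}(n+1)$ of $\mathrm{SO}_0(n+1,1)$ to promote conformal equivalence to isometry. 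Your argument is uniform in all dimensions $\geq 2$ (avoiding the paper's separate surface classification, though one should note that the Ferrand--Obata input in dimension $2$ reduces to uniformization), and it isolates exactly where homogeneity is used, namely to kill the compact case and to force conformal factors invariant under a transitive group to be constant; the price is a heavier black box (Ferrand--Obata rather than Lichnerowicz) plus the Lie-theoretic conjugacy of compact subgroups. It is worth noting that your final conjugation step is very much in the spirit of the paper's proof of its main theorem, which likewise combines a conformal Lichnerowicz--Obata statement with maximal-compact-subgroup arguments.
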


Theorem \ref{thm-1} is an immediate corollary of Lichnerowicz Theorem when $\dim M\geq3$ \cite{Li1964}, and it follows follows after some simple observations when $\dim M=2$ (see Section \ref{subsection-3-2}). See also \cite{GK1962} where Theorem \ref{thm-1} is proved
when $\dim M>3$.

The purpose of this paper is to generalize Theorem \ref{thm-1} to Finsler geometry.
On a Finsler manifold $(M,F)$, the conformal transformation group $C(M,F)$, the isometry group $I(M,F)$, and their identity components, $C_0(M,F)$ and $I_0(M,F)$ respectively,
can be similarly defined \cite{De2012}\cite{Fe1980}\cite{Kn1929}.
There are various average processes \cite{DX2016}\cite{MRTZ2009}\cite{MT2012} which can produce a Riemannian metric from a Finsler one while preserve or even enlarge
the conformal transformation group and the isometry group. Using this average technique,
we can quickly verify that most $C(M,F)$ are Lie transformation groups (see Lemma \ref{lemma-2} and Lemma \ref{lemma-4}), and it follows naturally that their Lie algebras $\mathfrak{C}(M,F)$ are the spaces of complete conformal vector fields.
The main theorem of this paper is
\begin{theorem}\label{thm-2}
Let $(M,F)$ be a compact connected homogeneous non-Riemannian Finsler manifold with $\dim M>1$. Then $C_0(M,F)=I_0(M,F)$, i.e., any conformal vector field on $(M,F)$ is a Killing vector field.
\end{theorem}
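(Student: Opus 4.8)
The plan is to reduce the Finsler statement to the Riemannian Theorem~\ref{thm-1} via an averaging construction, and then use the non-Riemannian hypothesis to force a contradiction. First I would invoke one of the averaging processes cited in the excerpt (for instance the construction of \cite{DX2016} or \cite{MRTZ2009}) to produce from $F$ a Riemannian metric $g$ on $M$ with the property that $C_0(M,F)\subseteq C_0(M,g)$ and $I_0(M,F)\subseteq I_0(M,g)$; the key point is that the averaging is canonical, so it intertwines with the isometric action of the homogeneous structure, hence $(M,g)$ is again a compact connected homogeneous Riemannian manifold with $\dim M>1$. A complete conformal vector field $X$ on $(M,F)$ is then a complete conformal vector field on $(M,g)$, so if $C_0(M,F)\neq I_0(M,F)$ we would get $C_0(M,g)\neq I_0(M,g)$, and Theorem~\ref{thm-1} would tell us $(M,g)$ is a round sphere $S^n$ with constant curvature.

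Next I would argue that this forces $F$ itself to be essentially a round-sphere metric, contradicting non-Riemannianity. Since $(M,g)$ is the constant-curvature sphere, $I_0(M,g)=\SO(n+1)$ acts transitively with isotropy $\SO(n)$; because $F$ is homogeneous, some transitive subgroup $G\subseteq I_0(M,F)\subseteq I_0(M,g)$ acts by $F$-isometries, and one checks $G$ must still act transitively on $M=S^n$. The isotropy representation of $\SO(n)$ on the tangent space of $S^n$ at a point is the standard representation, which is irreducible and in fact acts transitively on the unit sphere of that tangent space in the Euclidean norm. Any $G$-invariant Minkowski norm on that tangent space must therefore have its indicatrix preserved by (the isotropy part of) $G$; the hard step is to show the isotropy subgroup inside $G$ is still large enough — essentially all of $\SO(n)$ up to finite index, or at least a subgroup acting transitively on the Euclidean unit sphere — so that the only invariant Minkowski norms are the Euclidean ones, whence $F$ is Riemannian. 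This is where I expect the main obstacle: ruling out a ``small'' transitive isometry subgroup $G$ whose isotropy does not act transitively on directions, which would a priori allow a genuinely Finslerian $G$-invariant metric on the sphere.

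To handle that obstacle I would use the classification of transitive effective actions on spheres (Montgomery--Samelson / Borel): the connected groups acting transitively on $S^n$ are $\SO(n+1)$, $\SU(m)$, $\mathrm U(m)$, $\Sp(m)$, $\Sp(m)\cdot\Sp(1)$, $\Sp(m)\cdot\mathrm U(1)$, $G_2$, $\mathrm{Spin}(7)$, $\mathrm{Spin}(9)$, and in every case the isotropy subgroup acts transitively on the Euclidean unit sphere of the tangent space. Hence every $G$-invariant Minkowski norm is a scalar multiple of the Euclidean one at each point, and by homogeneity a global scalar, so $F$ is Riemannian — the desired contradiction. Combining the two halves: for a compact connected homogeneous \emph{non-Riemannian} Finsler manifold with $\dim M>1$ we cannot have $C_0(M,F)\neq I_0(M,F)$, so $C_0(M,F)=I_0(M,F)$ and every conformal vector field is Killing. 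I would also note that the inclusion $I_0(M,F)\subseteq C_0(M,F)$ is automatic, and that Lemma~\ref{lemma-2} and Lemma~\ref{lemma-4} guarantee $C(M,F)$ is a Lie transformation group so that ``conformal vector field'' and ``element of $\mathfrak C(M,F)$'' coincide, making the group-level and Lie-algebra-level statements equivalent.
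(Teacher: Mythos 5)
Your first half is exactly the paper's opening move (its Lemma~\ref{lemma-6}): average $F$ to a Riemannian $g$, note that a non-Killing conformal field for $F$ stays non-Killing conformal for $g$, and invoke Theorem~\ref{thm-1} to conclude $M$ is a homogeneous sphere. The second half, however, contains a fatal error. You claim that for every connected group $G$ acting transitively on $S^n$ the isotropy subgroup acts transitively on the Euclidean unit sphere of the tangent space, so that every $G$-invariant Minkowski norm is Euclidean and $F$ must be Riemannian. This is false for most entries of the Montgomery--Samelson/Borel list: for $\mathrm U(m)/\mathrm U(m-1)=S^{2m-1}$ the isotropy representation is $\mathbb{C}^{m-1}\oplus\mathbb{R}$ with a trivial summand; for $\Sp(m)/\Sp(m-1)$ it is $\mathbb{H}^{m-1}\oplus\mathbb{R}^3$; for $\mathrm{Spin}(9)/\mathrm{Spin}(7)$ it is $\mathbb{R}^7\oplus\mathbb{R}^8$. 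In none of these cases is the isotropy action transitive on directions, and correspondingly there exist infinite-dimensional families of genuinely non-Riemannian homogeneous Finsler metrics on odd spheres (e.g.\ invariant Randers metrics on $S^{2m-1}=\mathrm U(m)/\mathrm U(m-1)$). Your argument would prove that no non-Riemannian homogeneous Finsler sphere exists, which is false; so the contradiction you derive is not there, and the theorem does not follow.

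The paper's actual proof accepts that $(M,F)$ can be a non-Riemannian homogeneous Finsler sphere and instead shows directly that $\mathfrak{C}(M,F)$ cannot exceed $\mathfrak{I}(M,F)$. The key external input is the Finsler conformal Lichnerowicz--Obata theorem of Matveev--Rademacher--Troyanov--Zeghib (Theorem~\ref{thm-3}): since $(M,F)$ is compact and non-Riemannian, every conformal field $U$ is Killing for some rescaled metric $\rho F$, hence lies in a compact subgroup of $G=C_0(M,F)$, which forces $\mathrm{ad}(U)$ to be semisimple with $B(U,U)\le 0$ (Lemma~\ref{lemma-5}). A Levi decomposition $\mathfrak{g}=\mathfrak{s}+\mathfrak{r}$ together with a maximal-compact-subgroup argument then shows the Levi factor $S$ acts by $F$-isometries (Claim A), that any element of $\mathfrak{r}$ commuting with $\mathfrak{s}$ is Killing (Claim B), and finally that any putative $U\in\mathfrak{r}\setminus\mathfrak{k}$ would have $\mathrm{ad}(U)$ both nilpotent and semisimple, hence zero, contradicting Claim B. If you want to salvage your outline, this Lie-theoretic route (or some substitute for it) is the part you are missing; the representation-theoretic rigidity you hoped for at the isotropy level simply is not available.
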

Theorem \ref{thm-2} can also be formulated as
\begin{theorem}\label{cor-1} Let $(M,F)$ be a compact connected non-Riemannian homogeneous Finsler manifold with $\dim M>1$, and $\rho$ a positive smooth function on $M$. Then $\rho F$ is homogeneous if and only if $\rho$ is constant.
\end{theorem}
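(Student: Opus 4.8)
The plan is to obtain Theorem~\ref{cor-1} as a direct consequence of Theorem~\ref{thm-2}; the work is almost entirely bookkeeping with isometry and conformal groups. The ``if'' direction is immediate: if $\rho\equiv c$ is a positive constant, then a diffeomorphism preserves $F$ precisely when it preserves $cF$, so $I(M,cF)=I(M,F)$ acts transitively and $cF$ is homogeneous.

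For the converse, suppose $\rho F$ is a homogeneous Finsler metric. Since $\rho$ is a positive function, every isometry of $(M,\rho F)$ scales $F$ by a positive function, i.e.\ is a conformal transformation of $(M,F)$; hence $I(M,\rho F)\subseteq C(M,F)$. Both are Lie transformation groups --- for $C(M,F)$ this is the fact recalled just before Theorem~\ref{thm-2}, cf.\ Lemma~\ref{lemma-2} and Lemma~\ref{lemma-4} --- and the inclusion is continuous, so on identity components $I_0(M,\rho F)\subseteq C_0(M,F)$. By Theorem~\ref{thm-2} we have $C_0(M,F)=I_0(M,F)$, and therefore
\[
I_0(M,\rho F)\subseteq I_0(M,F).
\]

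Next I would observe that $I_0(M,\rho F)$ is transitive on $M$: since $(M,\rho F)$ is compact, $I(M,\rho F)$ is a compact Lie group, which acts transitively by homogeneity, and within the single orbit $M$ an orbit of the identity component $I_0(M,\rho F)$ has full dimension $\dim M$ (the isotropy subgroups in $I(M,\rho F)$ and in $I_0(M,\rho F)$ have the same dimension), hence is open; the $I_0(M,\rho F)$-orbits then form an open partition of the connected manifold $M$, so there is only one. Finally, let $g\in I_0(M,\rho F)$ be arbitrary. Then $g^{*}(\rho F)=\rho F$, and by the displayed inclusion $g^{*}F=F$; since $g^{*}(\rho F)=(\rho\circ g)\,g^{*}F=(\rho\circ g)\,F$ and $F>0$ off the zero section, we get $\rho\circ g=\rho$. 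Thus $\rho$ is invariant under a transitive group of diffeomorphisms of $M$, hence constant.

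The only point requiring a little care, and so the closest thing to an obstacle, is the passage among $I(M,\rho F)$, $C(M,F)$, $I(M,F)$ and their identity components: one needs that these are genuine (compact) Lie transformation groups, so that identity components are well behaved, and that the identity component of a transitive isometry group of a connected compact homogeneous Finsler manifold is already transitive. Both are consequences of the compactness hypothesis together with the averaging and Lie-group facts already quoted in the paper, so no new difficulty arises --- Theorem~\ref{cor-1} is in effect a restatement of Theorem~\ref{thm-2}.
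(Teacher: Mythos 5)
Your proof is correct and follows essentially the same route as the paper: both deduce the statement from Theorem~\ref{thm-2} via the inclusion $I_0(M,\rho F)\subset C_0(M,F)=I_0(M,F)$. The only cosmetic difference is that you conclude directly that $\rho$ is invariant under the transitive group $I_0(M,\rho F)$, whereas the paper runs a short contradiction argument with two points where $\rho$ takes different values.
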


%
%
The analytic method proving Theorem \ref{thm-1} is not valid in the Finsler context.
The proof of Theorem \ref{thm-2} has three other ingredients, i.e.,  the average process,
the Finsler Confromal Lichnerowicz-Obata Conjecture (proved by V.S. Matveev et al \cite{MRTZ2009}),
and the Lie theory.
In Section 2, we summarize some necessary knowledge in Finsler geometry.
In Section 3, we introduce the average process and use it to discuss the Finslerian conformal transformation group. In Section 4, we prove Theorem \ref{thm-2} and Theorem \ref{cor-1}.

\section{Preliminaries}
\subsection{Minkowski norm and Finsler metric}

A {\it Minkowski norm} $F$ on a finite dimensional real vector space $\mathbf{V}$
is a continuous function $F:\mathbf{V}\rightarrow[0,+\infty)$ satisfying the following requirements:
\begin{enumerate}
\item Regularity: $F$ is positive and smooth on $\mathbf{V}\backslash\{0\}$;
\item Positive 1-homogeneity: $F(\lambda y)=\lambda F(y)$, $\forall \lambda\geq0$, $y\in\mathbf{V}$;
\item Strong convexity: given any basis $\{e_1,\cdots,e_n\}$ of $\mathbf{V}$ and the corresponding linear coordinate
$y=y^ie_i\Leftrightarrow(y^1,\cdots,y^n)$, the Hessian matrix $(g_{ij}(y))=([\tfrac12F^2]_{y^iy^j})$
is positive definite when $y\neq0$, or equivalently,  the {\it fundamental tensor}
$$g_y(u,v)=\tfrac12\tfrac{\partial^2}{\partial s\partial t}|_{s=t=0}F^2(y+su+tv),\quad \forall  u,v\in \mathbf{V},$$
is an inner product on $\mathbf{V}$ for any $y\in\mathbf{V}\backslash\{0\}$.
\end{enumerate}
We call a Minkowski norm $F$ {\it Euclidean}
when $g_y(\cdot,\cdot)$ in (3) is independent of $y$.

A Finsler metric $F$ on a smooth manifold $M$ is a continuous function $F: TM\rightarrow[0,+\infty)$ such that $F|_{TM\backslash0}$ is smooth and $F|_{T_xM}$ is a Minkowski norm for each $x\in M$ \cite{BCS2000}. We usually use the pair $(M,F)$ to denote a {\it Finsler manifold}. We call a Finsler metric $F$ {\it Riemannian} if it is Euclidean in
each tangent space. In this case, we also use $g=F^2$ to denote the metric.

\subsection{Finslerian conformality and isometry}

It is obvious to see that, for any Finsler metric $F$ and positive smooth function $\rho$ on $M$,
$\rho F$ is also a Finsler metric on $M$. Here we call $F$ and $\rho F$ {\it conformally equivalent}.
A diffeomorphism $f$ on a Finsler manifold $(M,F)$ is called a {\it conformal transformation} if $F$ and $f^* F$ are conformally equivalent. In particular, we call $f$ {\it an isometry} if $f^*F=F$.

Conformal transformations and isometries can be infinitesimal generated by conformal and Killing vector fields respectively. A smooth tangent vector field $V$ on $(M,F)$ is called {\it conformal} if
there exists a smooth function $\rho$ on $M$, such that $\mathcal{L}_VF=\rho F$ (here $\mathcal{L}$ is the Lie derivative).  In particular, a conformal vector field $V$ is a {\it Killing vector field} when $\mathcal{L}_VF=0$.
A conformal or Killing vector field can generate a globally defined one-parameter subgroup of conformal
transformations or isometries only when it is complete. Since we only discuss compact manifolds in this paper, the completeness is automatically satisfied.

We denote by $C(M,F)$, $I(M,F)$, $C_0(M,F)$, $I_0(M,F)$, $\mathfrak{C}(M,F)$ and $\mathfrak{I}(M,F)$ the group of all conformal transformations, the group of all isometries,
the identity component of $C(M,F)$, the identity component of $I(M,F)$, the space of all complete conformal vector fields, the space of all complete Killing vector fields on $(M,F)$, respectively.
\section{Conformality for a homogeneous Finsler manifold}
\subsection{Average process and group of conformal transformations}


Let $F$ be a Minkowski norm on $\mathbf{V}$. Averaging its fundamental tensor along the indicatrix
$S=\{y|y\in\mathbf{V},F(y)=1\}$, we can get an Euclidean norm $\overline{F}$.
Here we only recall the
construction in Section 2.3 of \cite{DX2016}.
See \cite{MRTZ2009}\cite{MT2012} for more constructions which can also meet our needs.

Let $g$ be the Riemannian metric on $\mathbf{V}\backslash\{0\}$ determined by the fundamental tensor of $F$ (we call it the {\it Hessian metric} of $F$), and denote by $d\mu$ the volume measure
of $(S,g|_{S})$.
Then
$$\overline{F}(u)=\sqrt{\tfrac{\int_{y\in S}g_y(u,u)d\mu}{\int_{y\in S}d\mu}},
\quad\forall u\in \mathbf{V},$$
is the {\it averaged Euclidean norm} induced by $F$.

\begin{lemma}\label{lemma-3}
Let $F_i$ be a Minkowski norm on $\mathbf{V}_i$, $\overline{F}_i$ the
averaged Euclidean norm induced by $F_i$, and
$l:\mathbf{V}_1\rightarrow\mathbf{V}_2$ a real linear isomorphism satisfying
$F_1 = c F_2\circ l$ for some positive constant $c$. Then $\overline{F}_1 =c \overline{F}_2\circ l$.
\end{lemma}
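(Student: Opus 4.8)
The plan is to track how each ingredient entering the definition of $\overline{F}$ — the fundamental tensor, the indicatrix, and the Riemannian volume measure on the indicatrix — transforms under the substitution by $l$ together with the scaling by $c$. Write $g^{(i)}$ for the fundamental tensor of $F_i$, $S_i$ for its indicatrix, and $d\mu_i$ for the volume measure of $(S_i,g^{(i)}|_{S_i})$, so that $\overline{F}_i(u)^2=\bigl(\int_{S_i}g^{(i)}_y(u,u)\,d\mu_i\bigr)\big/\bigl(\int_{S_i}d\mu_i\bigr)$.

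First I would differentiate the relation $F_1^2=c^2\,F_2^2\circ l$ twice: since $l$ is linear, for any $y\ne 0$ and $u,v\in\mathbf{V}_1$ one gets $g^{(1)}_y(u,v)=c^2\,g^{(2)}_{l(y)}(l(u),l(v))$, i.e. $g^{(1)}=c^2\,l^*g^{(2)}$ as Riemannian metrics on $\mathbf{V}_1\backslash\{0\}$. Next, from $F_1(y)=1\iff F_2(c\,l(y))=1$ (using the $1$-homogeneity of $F_2$), the dilated map $\psi:=c\,l$ carries $S_1$ bijectively onto $S_2$. The point to verify is that $\psi$ is in fact an isometry of the Hessian metrics: using $d\psi=c\,l$, the identity just obtained, and the $0$-homogeneity of the fundamental tensor ($g^{(2)}_{\lambda z}=g^{(2)}_z$ for $\lambda>0$, since $F_2^2$ is $2$-homogeneous), one computes $(\psi^*g^{(2)})_y(u,u)=c^2 g^{(2)}_{c\,l(y)}(l(u),l(u))=c^2 g^{(2)}_{l(y)}(l(u),l(u))=g^{(1)}_y(u,u)$. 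Hence $\psi$ restricts to an isometry $(S_1,g^{(1)}|_{S_1})\to(S_2,g^{(2)}|_{S_2})$, and therefore pushes $d\mu_1$ forward to $d\mu_2$.

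With these three facts, a single change of variables $w=\psi(y)$ finishes the argument. In the numerator of $\overline{F}_1(u)^2$ rewrite $g^{(1)}_y(u,u)=c^2 g^{(2)}_{l(y)}(l(u),l(u))=c^2 g^{(2)}_{\psi(y)}(l(u),l(u))$ (again by $0$-homogeneity), so that $\int_{S_1}g^{(1)}_y(u,u)\,d\mu_1=c^2\int_{S_2}g^{(2)}_w(l(u),l(u))\,d\mu_2$; taking the integrand $\equiv 1$ gives $\int_{S_1}d\mu_1=\int_{S_2}d\mu_2$. Dividing yields $\overline{F}_1(u)^2=c^2\,\overline{F}_2(l(u))^2$, i.e. $\overline{F}_1=c\,\overline{F}_2\circ l$.

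The only genuine subtlety is the bookkeeping of the constant $c$: the bare map $l$ does not send $S_1$ onto $S_2$ but onto the rescaled indicatrix $c^{-1}S_2$, so one must absorb the dilation into $\psi=c\,l$ and invoke the $0$-homogeneity of the fundamental tensor to recognize $\psi$ as an honest isometry between the two indicatrices; everything else is a routine application of the change-of-variables formula. (Alternatively one could split into the linear-isometry case $c=1$, where $l(S_1)=S_2$ directly, and the pure-scaling case $l=\mathrm{id}$, and then compose; but the direct argument above is shorter and avoids repeating the homogeneity computation.)
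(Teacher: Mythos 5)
Your proof is correct and follows essentially the same route as the paper's: both reduce the claim to the fact that a suitably rescaled version of $l$ is an isometry between the indicatrices equipped with their Hessian metrics, hence preserves the volume measures, and then conclude by change of variables. The paper factors $l$ into a linear isometry composed with a dilation and treats the two cases separately before composing, whereas you absorb the constant into $\psi=c\,l$ and use the $0$-homogeneity of the fundamental tensor to do the computation in one pass; this is only a difference in bookkeeping, not in substance.
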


\begin{proof}

First, we prove the lemma when $c=1$, i.e., $l$ is a linear isometry. Let $g_i$, $S_i$ and $d\mu_i$ be the Hessian metric of $F_i$, the indicatrix of $F_i$, and the volume measure of $(S_i,g_i|_{S_i})$ respectively. Then $l$ induces an isometry between $(S_i, g_i|_{S_i})$ (see Page 1488 of \cite{XM2022} or Lemma 4.1 in \cite{Xu2022}), i.e.,
$g_{2,l(y_1)}(l(u),l(u))=g_{1,y_1}(u,u)$, $\forall y_1\in S_1$, $u\in\mathbf{V}_1$. Moreover,
$l$ preserves the volume measure, i.e.,  $l^*d\mu_2=d\mu_1$. So for any $u\in \mathbf{V}$,
\begin{eqnarray*}
\overline{F}_2(l(u))^2&=&\tfrac{\int_{y_2\in S_2}g_{2,y_2}(l(u),l(u))d\mu_2}{\int_{y_2\in S_2}d\mu_2}
=\tfrac{\int_{y_1\in S_1}g_{2,l(y_1)}(l(u),l(u))l^*d\mu_2}{\int_{y_1\in S_1}l^*d\mu_2}\\
&=&\tfrac{\int_{y_1\in S_1}g_{1,y_1}(u,u)d\mu_1}{\int_{y_1\in S_1}d\mu_1}=\overline{F}_1(u)^2,
\end{eqnarray*}
i.e., $\overline{F}_1 = \overline{F}_2\circ l$.

Second, we prove the lemma when $l=\mathrm{id}$ and $F_1=c F_2$. Since $c\cdot \mathrm{id}$ is a linear isometry between $F_1$ and $F_2$, above discussion indicates that $\overline{F}_1(u)=\overline{F}_2(cu)=c\overline{F}_2(u)$.

Since any $l$ in Lemma \ref{lemma-3} is the composition of the maps in above two cases, the proof is finished.
\end{proof}

Let $F$ be a Finsler metric on $M$. We can average $F$ in each tangent space, and get a Riemannian metric $g=\overline{F}^2$ on $M$. We call this $g$  the {\it averaged Riemannian metric} induced by $F$. Using Lemma \ref{lemma-3}, we can immediately observe

\begin{lemma}\label{lemma-1}
For the averaged Riemannian metric $g$ induced by a Finsler metric $F$ on $M$, we have the following:
\begin{enumerate}
\item when $f\in C(M,F)$ satisfies $f^*F=\rho F$, then $f^*g=\rho^2 g$;
\item when $V\in \mathfrak{C}(M,F)$ satisfies $L_VF=\rho F$, then $L_V g=2\rho g$;
\item $C(M,F)\subset C(M,g)$, $C_0(M,F)\subset C_0(M,g)$, and $\mathfrak{C}(M,F)\subset\mathfrak{C}(M,g)$;
\item $I(M,F)\subset I(M,g)$, $I_0(M,F)\subset I_0(M,g)$, and $\mathfrak{I}(M,F)\subset \mathfrak{I}(M,g)$.
\end{enumerate}
\end{lemma}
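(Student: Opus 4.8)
The plan is to deduce the whole lemma from the pointwise statement of Lemma \ref{lemma-3}, applied fibrewise with the differential of the diffeomorphism playing the role of the linear isomorphism $l$. To prove (1), I would start from $f\in C(M,F)$ with $f^*F=\rho F$, which unwinds to $F(df_x(v))=\rho(x)F(v)$ for all $x\in M$ and $v\in T_xM$. Fixing $x$ and taking $\mathbf{V}_1=T_xM$, $\mathbf{V}_2=T_{f(x)}M$, $F_1=F|_{T_xM}$, $F_2=F|_{T_{f(x)}M}$, $l=df_x$ and $c=\rho(x)^{-1}$, one has $F_1=cF_2\circ l$, so Lemma \ref{lemma-3} gives $\overline{F_1}=c\,\overline{F_2}\circ l$. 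Since $\overline{F_1}$ (resp.\ $\overline{F_2}$) is by definition the norm of the averaged Riemannian metric $g$ at $x$ (resp.\ at $f(x)$), squaring this identity yields $g_{f(x)}(df_x v,df_x v)=\rho(x)^2 g_x(v,v)$ for every $v$, i.e.\ $f^*g=\rho^2 g$.

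For (2) I would integrate (1) along the flow: since $M$ is compact, a conformal vector field $V$ with $\mathcal{L}_VF=\rho F$ is complete and generates a one-parameter group $\{\varphi_t\}$ of conformal transformations of $(M,F)$, with conformal factors $\rho_t$ depending smoothly on $(t,x)$, $\rho_0\equiv 1$ and $\partial_t|_{t=0}\rho_t=\rho$ (obtained by differentiating $\varphi_t^*F=\rho_t F$ on $TM\setminus 0$ and using $\mathcal{L}_VF=\rho F$). Part (1) applied to each $\varphi_t$ gives $\varphi_t^*g=\rho_t^2 g$, and differentiating at $t=0$ yields $\mathcal{L}_Vg=2\rho g$. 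Parts (3) and (4) then follow formally: if $f\in C(M,F)$ then $f^*F=\rho F$ for some positive $\rho$, hence $f^*g=\rho^2 g$ by (1), so $f\in C(M,g)$, and the special case $\rho\equiv 1$ gives $I(M,F)\subset I(M,g)$; the tautological inclusion $C(M,F)\hookrightarrow C(M,g)$ is a continuous homomorphism, so it carries the connected subgroup $C_0(M,F)$ into $C_0(M,g)$, and similarly for the isometry groups. On the infinitesimal level, $\mathcal{L}_VF=\rho F$ implies $\mathcal{L}_Vg=2\rho g$ by (2), so $\mathfrak{C}(M,F)\subset\mathfrak{C}(M,g)$, and taking $\rho=0$ gives $\mathfrak{I}(M,F)\subset\mathfrak{I}(M,g)$; completeness of all these vector fields is automatic by compactness.

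There is no deep obstacle here, since the real content is already packaged in Lemma \ref{lemma-3}; the proof is essentially bookkeeping. The only points that need a little care are: verifying that a conformal vector field genuinely generates conformal transformations with a jointly smooth conformal factor $\rho_t$ satisfying $\rho_0\equiv1$ and $\partial_t|_{t=0}\rho_t=\rho$, so that (2) really follows from (1) by differentiation rather than requiring a separate infinitesimal version of Lemma \ref{lemma-3}; and the routine remark that the inclusions of conformal and isometry groups are continuous for the natural (compact-open type) topologies, which is what lets us pass to identity components in (3) and (4).
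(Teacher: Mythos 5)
Your argument is correct and is exactly the route the paper intends: it states Lemma \ref{lemma-1} as an immediate consequence of Lemma \ref{lemma-3}, applied fibrewise with $l=df_x$ and $c=\rho(x)^{-1}$, which is what you do for (1), and (2)--(4) are the same formal bookkeeping (integrating along the flow to get $\varphi_t^*F=\rho_tF$ with $\rho_t(x)=\exp\int_0^t\rho(\varphi_s(x))\,ds$, then differentiating at $t=0$). The only cosmetic remark is that completeness of $V\in\mathfrak{C}(M,F)$ is already built into the paper's definition, so you need not invoke compactness for it.
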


This average process can help us quickly see that $I(M,F)$ is a Lie transformation group (See Section 2.3 in \cite{DX2016}).
Using a similar argument, we can prove

\begin{lemma}\label{lemma-2}
For a connected Finsler manifold $(M,F)$ with $\dim M>2$,
$C(M,F)$ is a Lie transformation group.
\end{lemma}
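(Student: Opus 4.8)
The plan is to reduce the Finslerian statement to the corresponding Riemannian fact, namely that the conformal transformation group of a connected Riemannian manifold of dimension $\geq 3$ is a Lie transformation group, and then to verify that the Finslerian conformal group sits inside the Riemannian one as a \emph{closed} subgroup, so that it inherits the Lie group structure. Concretely, let $g=\overline{F}^2$ be the averaged Riemannian metric induced by $F$. By Lemma~\ref{lemma-1}(3) we already know $C(M,F)\subset C(M,g)$, and since $\dim M>2$ the group $C(M,g)$ is a (finite-dimensional) Lie transformation group acting smoothly on $M$ by the classical theorem of Obata--Ferrand (or the cited works \cite{KN1963}, \cite{GK1962}). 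So it suffices to show that $C(M,F)$ is a closed subgroup of $C(M,g)$ in the appropriate topology (say the compact-open topology, or the topology coming from the Lie group structure of $C(M,g)$); a closed subgroup of a Lie group is a Lie subgroup, and the smooth action of $C(M,g)$ on $M$ restricts to a smooth action of $C(M,F)$ on $M$.

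First I would fix the precise topological setup: realize $C(M,F)$ and $C(M,g)$ inside $\mathrm{Diff}(M)$ with the compact-open $C^\infty$ topology, recall that $I(M,g)$ (hence $I(M,F)$) and $C(M,g)$ are Lie transformation groups, and note that the inclusion $C(M,F)\hookrightarrow C(M,g)$ is continuous. Then I would prove closedness: suppose $f_j\in C(M,F)$ converges to $f\in C(M,g)$; I must show $f\in C(M,F)$, i.e. that $f^*F=\rho F$ for some smooth positive $\rho$. Each $f_j$ satisfies $f_j^*F=\rho_j F$ with $\rho_j$ smooth positive. Evaluating at a point and a tangent vector, $\rho_j(x)=F(df_j(y))/F(y)$ for $y\in T_xM\setminus 0$, so convergence of $f_j$ (and of $df_j$) forces $\rho_j\to\rho$ for some continuous positive $\rho$, and passing to the limit in $f_j^*F=\rho_j F$ gives $f^*F=\rho F$ pointwise; since $f$ is a diffeomorphism and $F$ is smooth off the zero section, $\rho=(f^*F)/F$ is smooth, so $f\in C(M,F)$. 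This shows $C(M,F)$ is closed in $C(M,g)$, hence a Lie subgroup, and the induced action on $M$ is smooth, which is exactly the assertion.

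The main obstacle I anticipate is not the closedness argument, which is essentially soft, but making sure the ambient statement and topologies are correctly invoked: one needs that $C(M,g)$ genuinely \emph{is} a Lie transformation group for \emph{all} connected $M$ with $\dim M\geq 3$ (this is the Obata--Ferrand theorem, and it is where $\dim M>2$ is really used — in dimension $2$ the conformal group can be infinite-dimensional, which is exactly why the hypothesis $\dim M>2$ appears in Lemma~\ref{lemma-2}), and that the closed-subgroup theorem applies in this infinite-dimensional-looking but actually finite-dimensional setting. A secondary technical point is ensuring the convergence $f_j\to f$ in $\mathrm{Diff}(M)$ yields convergence of the differentials $df_j$ uniformly on the (compact, if $M$ is compact; otherwise locally) indicatrix bundle, so that the functions $\rho_j$ converge; for non-compact $M$ this is handled by working locally, which is enough to conclude smoothness of the limiting $\rho$.

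A cleaner alternative, which I would mention as a remark, is to bypass closedness entirely by exhibiting $C(M,F)$ as the isometry group of \emph{some} auxiliary structure: since $f\in C(M,F)$ iff $f\in C(M,g)$ and additionally $f$ permutes the indicatrix bundle of $F$ up to the conformal factor, one can try to characterize $C(M,F)$ as the stabilizer in $C(M,g)$ of a suitable tensor or of the ``conformal class direction'' determined by $F$ versus $g$; a stabilizer of a point under a smooth action of a Lie group is a closed subgroup, giving the conclusion. Either route finishes the proof; I expect the direct closedness argument to be the most transparent to write out.
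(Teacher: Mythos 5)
Your proposal is correct and follows essentially the same route as the paper: pass to the averaged Riemannian metric $g$, use that $C(M,g)$ is a Lie transformation group when $\dim M>2$, and show $C(M,F)$ is a closed subgroup via a limit argument on convergent sequences. The only cosmetic difference is that the paper cancels the conformal factor by taking the ratio $F(x,y)/F(x,y')$ before passing to the limit, whereas you track the convergence of the factors $\rho_j$ directly; both are the same soft closedness argument.
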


\begin{proof}
Let $g$ be the averaged Riemannian metric induced by $F$. By Theorem 1 in Page 310 of \cite{KN1963}, $C(M,g)$ is a Lie transformation group, i.e., $C(M,g)$ is a Lie group, and the map
$\Phi(f,x)=f(x): C(M,g)\times M\rightarrow M$
is smooth. It is easy to check that $C(M,F)$ is a transformation group acting on $M$.  Lemma \ref{lemma-1} tells us that
$C(M,F)$ is contained in $C(M,g)$. To prove Lemma \ref{lemma-2}, we only need to prove $C(M,F)$ is a closed subgroup of $C(M,g)$. Let $f_n$ be a sequence in $C(M,F)$, then we have
\begin{equation}\label{002}
\tfrac{F(x,y)}{F(x,y')}=\tfrac{F(f_n(x),(f_n)_*(y))}{F(f_n(x),(f_n)_*(y'))},\quad\forall x\in M, y,y'\in T_xM\backslash\{0\}, n\in\mathbb{N}.
\end{equation}
The smoothness
of $\Phi$ implies
$$\lim_{n\rightarrow\infty}f_n(x)=f(x),\ \lim_{n\rightarrow\infty}(f_n)_*(y)=f_*(y)\in TM\backslash0,\quad \forall x\in M, y\in T_xM\backslash 0.$$
So we can take the limit of (\ref{002}) for $n\rightarrow\infty$ and get
\begin{equation*}
\tfrac{F(x,y)}{F(x,y')}=\tfrac{F(f(x),f_*(y))}{F(f(x),f_*(y'))},\quad\forall x\in M, y,y'\in T_xM\backslash\{0\},
\end{equation*}
which guarantees $f\in C(M,F)$. This argument verifies the closeness of $C(M,F)$ in $C(M,g)$ and ends the proof.
\end{proof}

It is a slight difference between $C(M,F)$ and $I(M,F)$ that all $I(M,F)$ are Lie transformation groups \cite{DH2007}, but not all $C(M,F)$.
For those Lie transformation groups $C(M,F)$ and $I(M,F)$, their Lie algebras can be naturally identified as $\mathfrak{C}(M,F)$ and $\mathfrak{I}(M,F)$ respectively.

%
%
\subsection{Conformal fields on a compact homogeneous Finsler manifold}
\label{subsection-3-2}
Let $(M,F)$ be a compact connected homogeneous Finsler manifold. The homogeneity here means that
$I_0(M,F)$ acts transitively on $M$. When $\dim M>2$, $C_0(M,F)$ is a Lie transformation group by Lemma \ref{lemma-2}. Now we discuss $C_0(M,F)$ case by case when $\dim M\leq2$.

{\bf Case 1}: $\dim M=1$. In this case, $M=S^1$, and any Finsler metric $F$ on $M$ is
homogeneous if and only if $\lambda(x)=\sup_{y\in T_xM\backslash\{0\}}\tfrac{F(x,y)}{F(x,-y)}$
is constant for all $x\in M$. In this case, $C_0(M,F)=\mathrm{Diff}^+(S^1)$
and any smooth vector field  is conformal. This explains the necessity
for the dimension assumptions in Theorem \ref{thm-2} and Theorem \ref{cor-1}.

When $\dim M=2$, $I_0(M,F)$ is a compact connected Lie group with $\dim I_0(M,F)\leq 3$. So There are only three possibilities, a Riemannian $S^2$ with constant curvature, a Riemannian $\mathbb{R}\mathrm{P}^2$ with constant curvature,  or a locally Minkowskian $T^2$.

{\bf Case 2}:
$(M,F)$ is
the Riemannian $S^2$ with constant curvature. In this case $C_0(M,F)=PGL(2,\mathbb{C})$  can be identified as the group
of holomorphic diffeomorphisms on $\mathbb{C}\mathbf{P}^1=\mathbb{C}\coprod\{\infty\}$, i.e., each $f\in C_0(M,F)$ is represented by the map $z\mapsto \tfrac{az+b}{cz+d}$ with $ad-bc\neq0$.

{\bf Case 3}:
$(M,F)$ is a Riemannian $\mathbb{R}\mathrm{P}^2$ with constant curvature. We can identify $\mathbb{R}\mathrm{P}^2=\mathbb{C}\mathrm{P}^2/\mathbb{Z}_2$, in which $\mathbb{C}\mathrm{P}^1=\mathbb{C}\coprod\{\infty\}$ and
$\mathbb{Z}_2$
generated by the antipodal map is represented by $\theta(z)=-\overline{z}^{-1}$.
Consider any $V\in \mathfrak{C}(M,F)$. It one-to-one induces a $\theta$-invariant holomorhpic field $V'$ on $\mathbb{C}\mathrm{P}^1$, which generates a one-parameter subgroup in
$H=\{f| f\in PGL(2,\mathbb{C}), f\circ\theta=\theta\circ f\}$. This observation identifies the Lie group $C_0(M,F)$
with the identity component of $H$.
Direct calculation shows $\dim H=3$, so
$C_0(M,F)=I_0(M,F)=SO(3)$ in this case.

{\bf Case 4}: $(M,F)$ is a locally Minkowskian $T_2$.
Let $g$ be the averaged Riemannian metric induced by $F$. By Lemma \ref{lemma-2}, $(M,\overline{F})$ is a flat Riemannian torus. We may present $M$ as $M=\mathbb{C}/\Lambda$ and assume that $\mathfrak{g}$ is the standard Euclidean metric.
Using Lemma \ref{lemma-1}, any $V\in\mathfrak{C}(M,F)$ is a conformal vector field on $(M,g)$. Let $f_t(z)=\varphi(t,z)$ be the one-parameter subgroup of conformal transformations generated by $V$, in which $\varphi:\mathbb{R}\times \mathbb{R}/\Lambda\rightarrow\mathbb{R}/\Lambda$
is a smooth map, then $V$ is represented by $\tfrac{d}{dt}|_{t=0}f_t$.  Since $\varphi(t,\cdot)$ is holomorphic for each fixed $t$, i.e., $\tfrac{\partial}{\partial\overline{z}}\varphi(t,z)=0$,
we have $\tfrac{\partial}{\partial \overline{z}}(\tfrac{d}{dt}|_{t=0}f_t)=\tfrac{d}{dt}|_{t=0}\tfrac{\partial}{\partial\overline{z}}\varphi(t,z)=0$.
So $V=\tfrac{d}{dt}|_{t=0}f_t$ is
a  $\mathbb{C}$-valued holomorphic function on $\mathbb{C}/\Lambda$,
which must be constant.
That implies $\dim \mathfrak{C}(M,F)\leq2$, so $C_0(M,F)=I_0(M,F)=U(1)\times U(1)$.
Notice that when $F$ is non-Riemannian, one may
alternatively use Theorem 4.1 in \cite{MT2012} to argue that any $f\in C(M,T)$ must be homothetic. By the compactness of $M$, all homothetic transformations must be isometries.

To summarize, we have the following lemma which
explains the validity of Theorem \ref{thm-1} when $\dim M=2$.

\begin{lemma}\label{lemma-4}
Let $(M,F)$ be a compact connected homogeneous Finsler manifold with $\dim M>1$.
Then $C_0(M,F)$ is a Lie transformation group and $\mathrm{Lie}(C_0(M,F))=\mathfrak{C}(M,F)$.
\end{lemma}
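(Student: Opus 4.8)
The plan is to combine the case-by-case analysis carried out in Section \ref{subsection-3-2} into a single statement. Recall what must be established: for a compact connected homogeneous Finsler manifold $(M,F)$ with $\dim M>1$, the group $C_0(M,F)$ is a Lie transformation group, and its Lie algebra is precisely $\mathfrak{C}(M,F)$.

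First I would dispose of the higher-dimensional case. When $\dim M>2$, Lemma \ref{lemma-2} already asserts that $C(M,F)$ is a Lie transformation group; since $C_0(M,F)$ is the identity component of a Lie transformation group it is itself a Lie transformation group, and the identification $\mathrm{Lie}(C_0(M,F))=\mathfrak{C}(M,F)$ follows from the remark at the end of Section 3.1 that for those $C(M,F)$ which are Lie transformation groups the Lie algebra is naturally $\mathfrak{C}(M,F)$ (each one-parameter subgroup is generated by a complete conformal vector field and conversely, completeness being automatic by compactness of $M$). So the only remaining work is $\dim M=2$.

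Next I would invoke the trichotomy from Section \ref{subsection-3-2}: when $\dim M=2$ and $(M,F)$ is compact connected homogeneous, $I_0(M,F)$ is a compact connected Lie group of dimension at most $3$, forcing $(M,F)$ to be one of the Riemannian $S^2$ of constant curvature (Case 2), the Riemannian $\mathbb{R}\mathrm{P}^2$ of constant curvature (Case 3), or a locally Minkowskian $T^2$ (Case 4). In each of these three cases the preceding discussion exhibits $C_0(M,F)$ explicitly as a finite-dimensional Lie group acting smoothly on $M$ — namely $PGL(2,\mathbb{C})$, $SO(3)$, and $U(1)\times U(1)$ respectively — and in each case identifies its Lie algebra with $\mathfrak{C}(M,F)$ (for Case 2 via the holomorphic picture on $\mathbb{C}\mathbf{P}^1$, for Case 3 via the $\theta$-invariant holomorphic fields, for Case 4 via the observation that every conformal field on a flat torus is a constant holomorphic function). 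Assembling these with the $\dim M>2$ case completes the proof.

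The main obstacle is essentially bookkeeping rather than a single hard estimate: one must make sure that in the $\dim M=2$ cases the object called $C_0(M,F)$ really coincides with the concretely described Lie group, i.e., that no conformal transformations are lost or gained when passing between the Finsler metric $F$ and its averaged Riemannian metric $g$ — this is exactly what Lemma \ref{lemma-1}(3) supplies, together with the classical determination of $C_0$ for the constant-curvature $S^2$ and $\mathbb{R}\mathrm{P}^2$ and for the flat torus. Once that identification is in hand, the Lie-transformation-group property and the Lie-algebra identification are inherited from the model spaces, and the lemma follows.
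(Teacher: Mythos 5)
Your proposal is correct and follows essentially the same route as the paper: the paper proves Lemma \ref{lemma-4} precisely by invoking Lemma \ref{lemma-2} for $\dim M>2$ and by the explicit case-by-case identification of $C_0(M,F)$ (as $PGL(2,\mathbb{C})$, $SO(3)$, and $U(1)\times U(1)$) for the three possible homogeneous surfaces, with Lemma \ref{lemma-1} supplying the passage to the averaged Riemannian metric. No substantive difference to report.
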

\section{Proof of the main results}

Let $(M,F)$ be a compact connected homogeneous non-Riemannian Finsler manifold with $\dim M>1$. By Lemma \ref{lemma-4}, we can present $M$ as the homogeneous manifold $M=G/H=K/K\cap H$, in which $G=C_0(M,F)$ and
$K=I_0(M,F)$. Suppose ${C}_0(M,F)\neq {I}_0(M,F)$, or equivalently,   $\mathfrak{g}=\mathfrak{C}(M,F)\neq\mathfrak{k}=\mathfrak{I}(M,F)$, i.e., $(M,F)$
admits a non-Killing conformal vector field $V$.

\begin{lemma}\label{lemma-6}
Keeping all assumptions and notations in this section, then $M$ must be a homogeneous sphere, and there exists a compact connected semi simple normal subgroup $K'$ of $K$ which acts transitively on $M$.
\end{lemma}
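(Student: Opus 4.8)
The plan is to pass to the averaged Riemannian metric $g=\overline{F}^2$, apply the Riemannian statement Theorem~\ref{thm-1} to deduce that $M$ is a sphere, and then read off $K'$ from the classification of transitive actions on spheres. First I would verify that $g$ inherits every hypothesis of Theorem~\ref{thm-1}: it is compact and connected because $M$ is, and Lemma~\ref{lemma-1}(4) gives $K=I_0(M,F)\subseteq I_0(M,g)$, so $I_0(M,g)$ acts transitively and $(M,g)$ is a compact connected homogeneous Riemannian manifold with $\dim M>1$. The conformal non-triviality also survives the averaging: writing $\mathcal{L}_VF=\rho F$ for the given conformal vector field $V\notin\mathfrak{I}(M,F)$ we have $\rho\not\equiv0$, and Lemma~\ref{lemma-1}(2) gives $\mathcal{L}_Vg=2\rho g\neq0$ with $V$ still conformal for $g$, so $V\in\mathfrak{C}(M,g)\setminus\mathfrak{I}(M,g)$ and hence $C_0(M,g)\neq I_0(M,g)$. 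Theorem~\ref{thm-1} then forces $(M,g)$ to be isometric to a sphere of constant curvature; fixing such an isometry we identify $M$ with the round sphere $S^n$, $n=\dim M\geq2$, so that $I_0(M,g)=\SO(n+1)$. In particular $M$ is a sphere, and $K$ is a compact connected (hence closed) subgroup of $\SO(n+1)$ acting transitively and effectively on $S^n$.

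It then remains to produce $K'$, and here I would invoke the classical classification of compact connected Lie groups acting transitively (and effectively) on spheres (Montgomery--Samelson, Borel): up to conjugacy $K$ is one of $\SO(n+1)$; $\SU(m)$ or $U(m)$ with $n=2m-1$; $\Sp(m)$, $\Sp(m)\cdot U(1)$ or $\Sp(m)\cdot\Sp(1)$ with $n=4m-1$; $G_2$ with $n=6$; $\mathrm{Spin}(7)$ with $n=7$; or $\mathrm{Spin}(9)$ with $n=15$. In every case $K$ contains a closed connected semisimple normal subgroup $K'$ still acting transitively on $S^n$: each of $\SO(n+1)$, $\SU(m)$, $\Sp(m)$, $\Sp(m)\cdot\Sp(1)$, $G_2$, $\mathrm{Spin}(7)$, $\mathrm{Spin}(9)$ is semisimple (for $\SU(m)$ note $\dim M>1$ gives $m\geq2$), so we may take $K'=K$ for those $K$; and for the remaining two we take $K'=\SU(m)$, the normal subgroup of $U(m)$ that is transitive on $S^{2m-1}$, respectively $K'=\Sp(m)$, the normal subgroup of $\Sp(m)\cdot U(1)$ that is transitive on $S^{4m-1}$. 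This gives the lemma.

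The one step that needs genuine care is the reduction to $(M,g)$: one must be sure that averaging does not destroy the conformal non-triviality — it cannot, since $\mathcal{L}_Vg=2\rho g$ involves the very same function $\rho$, which is $\not\equiv0$ precisely because $V$ is not Killing for $F$ — and that all hypotheses of Theorem~\ref{thm-1} genuinely pass to $(M,g)$. Once that is in place the content of the lemma is just the known list of transitive actions on spheres together with the routine, case-by-case observation that each entry contains a semisimple normal subgroup acting transitively.
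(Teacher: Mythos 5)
Your proof is correct and follows essentially the same route as the paper: pass to the averaged Riemannian metric, check via Lemma \ref{lemma-1} that homogeneity and the non-Killing conformal field survive, apply Theorem \ref{thm-1} to get a round sphere, and extract $K'$ from the Montgomery--Samelson--Borel classification. The only cosmetic difference is at the exceptional cases: you dispose of $G_2$ and $\mathrm{Spin}(7)$ by noting they are themselves semisimple (so $K'=K$ works), whereas the paper rules them out as full connected isometry groups since any $G_2$- or $\mathrm{Spin}(7)$-invariant metric on $S^6$ or $S^7$ is round with isometry group $\SO(7)$ or $\SO(8)$; both dispositions are valid.
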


\begin{proof}Let $g$ be the averaged Riemannian metric induced by $F$. By Lemma \ref{lemma-1},
$g$ is homogeneous Riemannian metric on $M$, and $V$ is a non-Killing conformal vector field on $(M,g)$. Theorem \ref{thm-1} tells us that
$M$ is a sphere. By the classification for homogeneous spheres \cite{Bo1940}\cite{MS1943}, $SO(n)$, $SU(n)$, $Sp(n)$ or $Spin(9)$ acts transitively on the homogeneous Finsler sphere $M=SO(n)/SO(n-1)$, $SU(n)/SU(n-1)=U(n)/U(n-1)$, $Sp(n)/Sp(n-1)=Sp(n)U(1)/Sp(n-1)U(1)=Sp(n)Sp(1)/Sp(n-1)Sp(1)$ or
$Spin(9)/Spin(7)$ respectively. Since the connected isometry groups of $S^6=G_2/SU(3)$ and $S^7=Spin(7)/G_2$ are $SO(7)$ and $SO(8)$ respectively, $G_2$ and $Spin(7)$ do not need to appear.
The proof is finished.
\end{proof}

\begin{lemma}\label{lemma-5}
Keeping all assumptions and notations in this section, then for any element $U\in \mathfrak{g}$, $\mathrm{ad}(U)=[U,\cdot]:\mathfrak{g}\rightarrow\mathfrak{g}$ is semi simple, and $B(U,U)=\mathrm{tr}(\mathrm{ad}(U)^2)\leq0$, in which $B$ is the Killing form
of $\mathfrak{g}$.
\end{lemma}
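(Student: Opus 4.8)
The plan is to reduce the whole statement to the single structural fact that $\mathfrak{g}=\mathfrak{C}(M,F)$ is a \emph{compact} Lie algebra, i.e. the Lie algebra of a compact Lie group; once this is in hand, both assertions are immediate.

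The crucial input will be the Finsler conformal Lichnerowicz--Obata theorem of \cite{MRTZ2009}: a connected Finsler manifold admitting an essential group of conformal transformations is conformally diffeomorphic to a round sphere or to Euclidean space, each with its standard \emph{Riemannian} metric. I would apply this to $G=C_0(M,F)$. Our $(M,F)$ is not conformally diffeomorphic to $\mathbb{R}^n$, since $M$ is compact; and it is not conformally diffeomorphic to a round Riemannian sphere, for such a diffeomorphism would pull the round metric back to a Riemannian metric conformal to $F$, hence (being a positive multiple of a Riemannian metric) would force $F$ itself to be Riemannian, contrary to hypothesis. (There is no dimension obstruction to applying \cite{MRTZ2009}: by Lemma \ref{lemma-6} $M$ is a sphere, while the only compact connected homogeneous non-Riemannian Finsler surface is a locally Minkowskian torus, so in our situation $\dim M\ge 3$.) Therefore $G$ is \emph{not} essential, which by definition means there is a positive smooth function $\rho$ on $M$ with $G\subset I(M,\rho F)$. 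Every $U\in\mathfrak{g}=\mathfrak{C}(M,F)$ is conformal for the conformal class of $F$, hence for $\rho F$, and its flow lies in $G$, hence in $I(M,\rho F)$, so $U$ is a Killing vector field of $(M,\rho F)$; thus $\mathfrak{g}\subset\mathfrak{I}(M,\rho F)$.

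Since $M$ is compact, $I(M,\rho F)$ is a compact Lie group, so $\mathfrak{I}(M,\rho F)$ is a compact Lie algebra, and therefore so is its subalgebra $\mathfrak{g}$, because the restriction of an $\mathrm{ad}$-invariant inner product remains $\mathrm{ad}$-invariant. Fix an $\mathrm{ad}(\mathfrak{g})$-invariant inner product $\langle\cdot,\cdot\rangle$ on $\mathfrak{g}$. For $U\in\mathfrak{g}$ the operator $\mathrm{ad}(U)$ is then skew-symmetric with respect to $\langle\cdot,\cdot\rangle$, hence normal, hence semisimple (diagonalizable over $\mathbb{C}$, with purely imaginary eigenvalues); and, computing the trace in a $\langle\cdot,\cdot\rangle$-orthonormal basis $\{e_i\}$ of $\mathfrak{g}$,
\[
B(U,U)=\mathrm{tr}(\mathrm{ad}(U)^2)=\sum_i\bigl\langle\mathrm{ad}(U)e_i,\,-\mathrm{ad}(U)e_i\bigr\rangle=-\sum_i\|\mathrm{ad}(U)e_i\|^2\le 0 .
\]

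The entire weight of the argument rests on invoking \cite{MRTZ2009} correctly; in particular one must keep in mind that its conclusion is the standard Riemannian round sphere, which is precisely what makes the non-Riemannian hypothesis on $F$ do its job (and that the group $C_0(M,F)$, being a genuine Lie transformation group by Lemma \ref{lemma-4}, is an admissible group of conformal transformations to feed into it). Everything after that — the passage from inessentiality to a compact Lie algebra and then to the two stated properties — is routine, so I do not anticipate a genuine obstacle here.
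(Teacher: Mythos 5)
Your linear-algebra endgame (skew-symmetry of $\mathrm{ad}(U)$ with respect to an invariant inner product, hence semisimplicity and $\mathrm{tr}(\mathrm{ad}(U)^2)\le 0$) is exactly the paper's, and your elimination of the sphere and Minkowski alternatives via compactness and non-Riemannianness is also the paper's. The problem is the input you feed into it. The theorem of \cite{MRTZ2009}, as the paper states it (Theorem \ref{thm-3}) and as the paper uses it, is a statement about a \emph{single} complete conformal vector field: for each $U$ there is some positive function $\rho=\rho_U$ such that $U$ is Killing for $(M,\rho_U F)$. You instead invoke a ``group version'' yielding one $\rho$ with $G=C_0(M,F)\subset I(M,\rho F)$, i.e.\ inessentiality of the whole group at once. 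That is a strictly stronger statement, and the passage from ``every $U\in\mathfrak{g}$ is individually inessential'' to ``$G$ is inessential'' is not formal. Indeed, if your version were available, Theorem \ref{thm-2} itself would follow in three lines: $I_0(M,F)\subset G\subset I(M,\rho F)$ preserves both $F$ and $\rho F$ and acts transitively, which forces $\rho$ to be constant, hence $C_0(M,F)\subset I(M,F)$. The entire Levi-decomposition argument of Section 4 (Claims A and B) exists precisely to bridge this gap, so you cannot assume it away inside Lemma \ref{lemma-5}.

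The repair is small and lands you on the paper's proof: apply Theorem \ref{thm-3} to the given $U$ alone, conclude $U\in\mathfrak{g}'=\mathrm{Lie}(I_0(M,\rho F))$ for some $\rho$ depending on $U$, note that $G'=I_0(M,\rho F)$ is a compact subgroup of $G$ by compactness of $M$, and average an arbitrary inner product on $\mathfrak{g}$ over the $\mathrm{Ad}(G')$-action. This gives an inner product on all of $\mathfrak{g}$ (not merely on a subalgebra) for which $\mathrm{ad}(U)$ is anti-self-adjoint, and your computation then goes through verbatim for that particular $U$. What you lose relative to your intended argument is the global conclusion that $\mathfrak{g}$ is a compact Lie algebra; but the lemma only claims the per-element properties, so nothing more is needed. (Your parenthetical reduction to $\dim M\ge 3$ is also unnecessary, since Theorem \ref{thm-3} already holds for $\dim M>1$.)
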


The proof of Lemma \ref{lemma-5} needs the following theorem (Finsler Confromal Lichnerowicz-Obata Conjecture).

\begin{theorem}\label{thm-3}(\cite{MRTZ2009})
Suppose that $U$ is a complete conformal vector field on a connected Finsler manifold $(M,F)$ with $\dim M>1$. Then one of the following must happen:
\begin{enumerate}
\item $U$ is a Killing vector field on $(M,\rho F)$ for some positive smooth function $\rho$ on $M$;
\item $(M,F)$ is conformally equivalent to a Riemannian sphere with constant curvature;
\item $(M,F)$ is conformally equivalent to a Minkowski space and $U$ is homothetic.
\end{enumerate}
\end{theorem}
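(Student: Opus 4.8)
The plan is to transfer the problem to Riemannian conformal geometry via the averaging construction, apply the classical rigidity there, and then lift the conclusion back to $F$. Call $U$ \emph{inessential} when alternative (1) holds, i.e. when $U$ is Killing for some $\rho F$; it then suffices to prove that every \emph{essential} complete conformal field satisfies (2) or (3). Writing $\mathcal{L}_U F=\rho F$, Lemma \ref{lemma-1} produces the averaged Riemannian metric $g=\overline{F}^2$ with $\mathcal{L}_U g=2\rho g$, so $U$ is automatically a complete conformal field of $(M,g)$, and the one-parameter group it generates acts conformally on $(M,g)$.

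I would then invoke the Riemannian Ferrand--Obata theorem. If $U$ remains essential as a conformal field of $(M,g)$ --- that is, $U$ is not Killing for any metric $e^{2\phi}g$ in the conformal class $[g]$ --- then $(M,g)$ is conformally diffeomorphic to the standard round sphere $S^n$ when $M$ is compact, and to Euclidean space $\mathbb{R}^n$ when $M$ is noncompact. Since the conformal class $[g]$ is exactly the image under averaging of the Finslerian conformal class $\{\rho F\}$, this already places $(M,F)$ inside the sphere model underlying (2) or the flat model underlying (3); it remains to promote the Riemannian conformal identification to a Finslerian one.

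The main obstacle is the gap between Finslerian and Riemannian essentialness: averaging can destroy essentialness, so $U$ may be essential for $F$ yet Killing for some $e^{2\phi}g=\rho^2 g$. In that regime $U$ preserves the isotropic (Riemannian) part of $\rho F$ but not $\rho F$ itself, so the entire obstruction is carried by the Cartan tensor $C$, the tensorial measure of the failure of $F$ to be Riemannian. I would track $C$ along the complete flow of $U$: it transforms tensorially under the conformal motion, and coupling its evolution to the nonconstant factor $\rho$ yields a first-order system whose only outcomes are $C\equiv 0$, forcing $F$ to be Riemannian, or a rigid configuration of constant flag curvature with the flow acting by dilations. This is the technically heaviest step, and it is exactly here that the cited work of Matveev et al. does the genuine analysis; compactness of $M$ versus mere completeness of the flow is what rules out intermediate, non-rigid behaviour.

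Finally, I would separate the two models by topology and dynamics. In the compact case the analysis forces $C\equiv 0$, so $F$ is Riemannian and $(M,F)$ is conformally the round sphere of constant curvature, which is (2); in the noncompact flat case the flow consists of dilations of $\mathbb{R}^n$, $F$ is conformal to a translation-invariant Minkowski norm, and $U$ is homothetic, which is (3). Completeness of $U$ guarantees the flow is globally defined, and this is what upgrades the infinitesimal identity $\mathcal{L}_U F=\rho F$ to the global classification in the three listed alternatives.
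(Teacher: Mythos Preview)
The paper does not prove Theorem~\ref{thm-3} at all: it is quoted verbatim from \cite{MRTZ2009} and then used as a black box in the proof of Lemma~\ref{lemma-5}. So there is no proof in the paper to compare your proposal against; what you have written is a sketch aimed at the original Matveev--Rademacher--Troyanov--Zeghib argument, not at anything in this manuscript.

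On the merits of the sketch itself, there is a concrete error in your case analysis. You worry that ``averaging can destroy essentialness,'' i.e.\ that $U$ might be essential for $F$ yet inessential for $g=\overline{F}^2$. This cannot happen. By Lemma~\ref{lemma-3} the averaging operation satisfies $\overline{e^{\phi}F}=e^{\phi}\,\overline{F}$, so the conformal class of $F$ maps bijectively to the conformal class of $g$. If $\mathcal{L}_U F=\sigma F$ then $\mathcal{L}_U g=2\sigma g$, and $U$ is Killing for $e^{2\phi}g$ iff $U\phi+\sigma=0$ iff $U$ is Killing for $e^{\phi}F$. Thus $U$ is essential for $F$ if and only if it is essential for $g$, and your ``main obstacle'' case is vacuous.

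The genuine difficulty lies elsewhere: once Ferrand--Obata tells you $(M,g)$ is conformally $S^n$ or $\mathbb{R}^n$, you must still show that $F$ itself is conformally Riemannian (case~(2)) or a Minkowski norm (case~(3)). Your Cartan-tensor evolution idea is too vague to carry this. The argument in \cite{MRTZ2009} instead exploits the dynamics of the essential conformal flow: the flow has attracting or repelling fixed points, and analysing the rescaled limits of $F$ near such a point forces the Minkowski norm on the tangent space there to be Euclidean, hence $F$ is Riemannian everywhere in its conformal class. That dynamical step, not a hypothetical mismatch of essentialness, is where the real work in \cite{MRTZ2009} takes place.
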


\begin{proof}[Proof of Lemma \ref{lemma-5}]
Since $(M,F)$ is compact and non-Riemannian, only (1) in Theorem \ref{thm-3} can happen, i.e.,
we can find a positive smooth function $\rho$ on $M$, such that the conformal vector field $U\in\mathfrak{g}$ is a Killing vector field for $(M,\rho F)$. Then $U$ is contained in the Lie subalgebra $\mathfrak{g}'$ which generates $G'=I_0(M,\rho F)$.
Since $M$ is compact, $I_0(M,\rho F)$ is a compact subgroup in $G=C_0(M,F)$. So we can find an $\mathrm{Ad}(G')$-invariant inner product on $\mathfrak{g}$, for which $\mathrm{ad}(U)$ is anti self adjoint. The semi simpleness of $\mathrm{ad}(U)$ follows immediately.
Moreover, the square of $\mathrm{ad}(U)$ is semi negative definite,
so $B(U,U)=\mathrm{tr}(\mathrm{ad}(U)^2)\leq0$.
This ends the proof.
\end{proof}

Denote by $\mathfrak{r}$ the radical (i.e., the maximal solvable ideal)
of $\mathfrak{g}$.
Levi's Theorem (see Theorem 5.6.6 in \cite{HN2012}) provides a linear decomposition
$\mathfrak{g}=\mathfrak{s}+\mathfrak{r}$, in which
$\mathfrak{s}$ is a semi simple subalgebra (called a {\it Levi subalgebra}).
Notice that the Levi subalgebra $\mathfrak{s}$ of $\mathfrak{g}$ is not unique, because it can be changed by any $\mathrm{Ad}(G)$-action.

\begin{lemma}Keeping all above assumptions and notations in this section, then
$\mathfrak{s}$ is compact.
\end{lemma}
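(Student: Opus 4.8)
The plan is to prove that the Killing form $B_{\mathfrak{s}}$ of the semisimple Lie algebra $\mathfrak{s}$ is negative definite, which by the standard criterion (see, e.g., \cite{HN2012}) is equivalent to $\mathfrak{s}$ being a compact Lie algebra.

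The input I would use is a slightly sharper form of Lemma \ref{lemma-5}, already contained in its proof: for every $U\in\mathfrak{g}$ there is an inner product on $\mathfrak{g}$ --- namely an $\mathrm{Ad}(I_0(M,\rho F))$-invariant one, where $\rho$ depends on $U$ as in that proof --- with respect to which $\mathrm{ad}_{\mathfrak{g}}(U)$ is anti-self-adjoint. Consequently all eigenvalues of $\mathrm{ad}_{\mathfrak{g}}(U)$, acting on $\mathfrak{g}\otimes\mathbb{C}$, are purely imaginary, for every $U\in\mathfrak{g}$.

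Now fix $U\in\mathfrak{s}$. Since the radical $\mathfrak{r}$ is an ideal, it is $\mathrm{ad}_{\mathfrak{g}}(U)$-invariant, and, under the canonical isomorphism $\mathfrak{s}\cong\mathfrak{g}/\mathfrak{r}$ supplied by Levi's decomposition, the operator induced by $\mathrm{ad}_{\mathfrak{g}}(U)$ on $\mathfrak{g}/\mathfrak{r}$ is exactly $\mathrm{ad}_{\mathfrak{s}}(U)$. Hence the eigenvalues of $\mathrm{ad}_{\mathfrak{g}}(U)$, counted with multiplicity, split as the union of those of $\mathrm{ad}_{\mathfrak{g}}(U)|_{\mathfrak{r}}$ and those of $\mathrm{ad}_{\mathfrak{s}}(U)$; in particular the eigenvalues $\lambda_1,\dots,\lambda_m$ of $\mathrm{ad}_{\mathfrak{s}}(U)$ are purely imaginary. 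Therefore
\[
B_{\mathfrak{s}}(U,U)=\mathrm{tr}(\mathrm{ad}_{\mathfrak{s}}(U)^2)=\sum_{j=1}^{m}\lambda_j^2\leq 0 ,
\]
so $B_{\mathfrak{s}}$ is negative semidefinite on $\mathfrak{s}$. Since $\mathfrak{s}$ is semisimple, $B_{\mathfrak{s}}$ is nondegenerate, and a nondegenerate negative semidefinite symmetric bilinear form is negative definite; thus $B_{\mathfrak{s}}<0$ and $\mathfrak{s}$ is compact.

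The one point that needs care is that it is genuinely necessary to use the anti-self-adjointness extracted from the proof of Lemma \ref{lemma-5}, not merely the inequality $B_{\mathfrak{g}}(U,U)\leq 0$ in its statement: passing from $\mathfrak{g}$ to $\mathfrak{s}$ subtracts $\mathrm{tr}((\mathrm{ad}_{\mathfrak{g}}(U)|_{\mathfrak{r}})^2)$, whose sign is not controlled by the Killing form of $\mathfrak{g}$ alone, so the bound $B_{\mathfrak{g}}(U,U)\leq 0$ by itself does not yield $B_{\mathfrak{s}}(U,U)\leq 0$. The purely imaginary spectrum of $\mathrm{ad}_{\mathfrak{g}}(U)$ sidesteps this by bounding $B_{\mathfrak{s}}(U,U)$ directly.
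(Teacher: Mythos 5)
Your proof is correct, but it takes a genuinely different route from the paper's. The paper never looks at the intrinsic Killing form $B_{\mathfrak{s}}$: it uses only the stated conclusion of Lemma \ref{lemma-5}, that the Killing form $B$ of $\mathfrak{g}$ is negative semidefinite, forms the ideal $\mathfrak{r}'=\{U\in\mathfrak{g}: B(U,\mathfrak{g})=0\}$ (solvable by Cartan's criterion, and equal to the null cone of $B$ by semidefiniteness), observes that $B$ descends to a negative definite invariant form on $\mathfrak{g}/\mathfrak{r}'$ so that this quotient is a compact Lie algebra, and then identifies $\mathfrak{s}$ with the derived algebra $[\mathfrak{g}/\mathfrak{r}',\mathfrak{g}/\mathfrak{r}']$ via $\mathfrak{r}=\pi^{-1}(\mathfrak{c}(\mathfrak{g}/\mathfrak{r}'))$. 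You instead bound $B_{\mathfrak{s}}$ directly by splitting the spectrum of $\mathrm{ad}_{\mathfrak{g}}(U)$ along the ideal $\mathfrak{r}$ and the Levi complement; all the individual steps (the spectrum splitting over an invariant subspace, $\mathrm{tr}(\mathrm{ad}_{\mathfrak{s}}(U)^2)=\sum\lambda_j^2\le 0$ for purely imaginary $\lambda_j$, nondegeneracy from semisimplicity, and semidefinite plus nondegenerate implies definite) are sound. Your route is shorter and avoids the structure theory of $\mathfrak{g}/\mathfrak{r}'$, but, as you correctly flag, it consumes a strictly stronger input than the statement of Lemma \ref{lemma-5} --- the anti-self-adjointness of $\mathrm{ad}(U)$ with respect to an $\mathrm{Ad}(I_0(M,\rho F))$-invariant inner product, which lives only inside that lemma's proof --- whereas the paper's argument runs on the stated inequality $B(U,U)\le 0$ alone. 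Your closing caveat, that $B_{\mathfrak{g}}(U,U)\le 0$ by itself does not control $B_{\mathfrak{s}}(U,U)$ because the restriction of $\mathrm{ad}_{\mathfrak{g}}(U)^2$ to $\mathfrak{r}$ contributes a trace of uncontrolled sign, is exactly right and is precisely why the paper passes through the quotient $\mathfrak{g}/\mathfrak{r}'$ rather than through the Killing form of $\mathfrak{s}$ itself.
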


\begin{proof}The Killing form $B$ satisfies the following associative
property,
\begin{equation}\label{003}
B([W_1,W_2],W_3)+B(W_1,[W_2,W_3])=0,\quad \forall W_1,W_2,W_3\in\mathfrak{g},
\end{equation}
so it is well known that $\mathfrak{r}'=\{U| U\in\mathfrak{g}, B(U,\mathfrak{g})=0\}$ is an ideal of $\mathfrak{g}$. By Cartan's Criterion (see Theorem 5.4.20 in \cite{HN2012}), $\mathfrak{r}'$ is solvable. Let $\pi:\mathfrak{g}\rightarrow\mathfrak{g}/\mathfrak{r}'$ be
the quotient endomorphism. Then
$B$ induces a negative definite bilinear form $\overline{B}(\pi(W_1),\pi(W_2))=B(W_1,W_2)$ on $\mathfrak{g}/\mathfrak{r}'$, which satisfies the associative property (see (\ref{003})).
So $\mathfrak{g}/\mathfrak{r}'$ is compact and  $\mathfrak{g}/\mathfrak{r}'=[\mathfrak{g}/\mathfrak{r}',\mathfrak{g}/\mathfrak{r}']\oplus
\mathfrak{c}(\mathfrak{g}/\mathfrak{r}')$. Obviously, $\mathfrak{r}=\pi^{-1}(\mathfrak{c}(\mathfrak{g}/\mathfrak{r}'))$
and then $\pi$ induces a Lie algebra isomorphism from $\mathfrak{s}$ to $ [\mathfrak{g}/\mathfrak{r}',\mathfrak{g}/\mathfrak{r}']$. The compactness of $\mathfrak{s}$ is proved.
\end{proof}

\begin{proof}[Proof of Theorem \ref{thm-2}]We keep all assumptions and notations in this section, and look for a contradiction from $\mathfrak{g}=\mathfrak{C}(M,F)\neq\mathfrak{I}=\mathfrak{k}$.
Let $S$ be the Lie subgroup $\mathfrak{s}$ generates in $G$ and $K'$ the subgroup in Lemma \ref{lemma-5}. Both $S$ and $K=I_0(M,F)$ are compact and connected.
 By Theorem 14.1.3 in \cite{HN2012}, i.e., each compact subgroup of $G$ is contained in a maximal compact subgroup
and all maximal compact subgroups of $G$ are conjugate to each other,
we may assume that $S$ and $K$ are contained in
 the same maximal compact subgroup $L$ of $G$.

 {\bf Claim A}: $K'\subset S\subset K$, i.e., $S$ acts transitively and isometrically on $(M,F)$.

 Denote by $\mathfrak{l}$ the Lie algebra of $L$, which is
 a compact Lie subalgebra of $\mathfrak{g}$. Because
 $\mathfrak{l}=\mathfrak{s}+(\mathfrak{l}\cap\mathfrak{r})$, in which $\mathfrak{l}\cap\mathfrak{r}$ is a solvable ideal of $\mathfrak{l}$ and $\mathfrak{s}$ is semi simple, we have $\mathfrak{l}\cap \mathfrak{r}=\mathfrak{c}(\mathfrak{l})$ and
 $ [\mathfrak{l},\mathfrak{l}]=\mathfrak{s}$. Moreover,
 $\mathfrak{k}'=\mathrm{Lie}(K')$ is semi simple, so we have $\mathfrak{k}'=[\mathfrak{k}',\mathfrak{k}']\subset[\mathfrak{l},\mathfrak{l}]=\mathfrak{s}$,
 which implies $K'\subset S$.

Since $K'$ acts transitively on $M$, so does $S$. We can present $M$ as $M=S/S\cap H$,
where $S\cap H$ is the compact isotropy subgroup at $o\in M$.
The isotropy action of $S\cap H$ is contained in $(0,+\infty)\times O(T_oM,F(o,\cdot))$,
in which $O(T_oM,F(o,\cdot))$ is the group of all linear isometries on the Minkowski norm space $(T_oM,F(o,\cdot))$. The compactness of $S\cap H$ implies that the image of its isotropic action
is contained in $O(T_oM,F(o,\cdot))$, i.e., for any $y\in T_oM$ and $f\in S\cap H$, $F(o,y)=F(o,f_*(y))$. Now we consider any $f\in S$ and $x\in M$. The transitiveness of the $K'$-action on $M$ implies a decomposition $f=f_1\circ f_2\circ f_3$, such that
$f_1,f_3\in K'$ and $f_2\in S\cap H$ satisfy $f_3(x)=o$ and $f_1(o)=f(x)$. The $K'$-action on $M$ is isometric, so for any $y\in T_xM$,
\begin{eqnarray*}
F(x,y)&=&F(o, (f_3)_*(y))=F(o,(f_2)_*((f_3)_*(y)))\\
&=&F(f_1(o),(f_1)_*((f_2)_*((f_3)_*(y))))
=F(f(x),f_*(y)).
\end{eqnarray*}
So we have $S\subset I_0(M,F)=K$, which proves Claim A.

Because $\mathfrak{g}\neq\mathfrak{k}$, Claim A implies that $\mathfrak{r}$ is not contained in $\mathfrak{k}$. We have deduced $\mathfrak{l}\cap\mathfrak{r}=\mathfrak{c}(\mathfrak{l})$, so $[\mathfrak{s},\mathfrak{r}\cap\mathfrak{k}]\subset[\mathfrak{l},\mathfrak{l}\cap\mathfrak{r}]=0$. Further more, we claim

{\bf Claim B}: Any $U\in\mathfrak{r}$ commuting with $\mathfrak{s}$ is contained in $\mathfrak{k}$.

Let $f_t$ be the conformal transformations generated by $U$. For each $t\in\mathbb{R}$, $(f_t)_*U=U$. The assumption $[U,\mathfrak{s}]=0$ implies $f_*U=U$ for any $f\in S$.
By Claim A, $S$ acts isometrically and transitively on $(M,F)$, so $U$ is of constant length. If $U=0$, then it is obviously a Killing vector field. Otherwise $(f_t)_*U=U$ implies each conformal $f_t$
is in fact isometric. Claim B is proved.

By Claim B, we has an $\mathrm{ad}(\mathfrak{s})$-invariant
decomposition $\mathfrak{r}=\mathfrak{r}_0+\cdots+\mathfrak{r}_m$ with $m>0$, such that the
$\mathrm{ad}(\mathfrak{s})$-action is trivial on  $\mathfrak{r}_0=\mathfrak{r}\cap\mathfrak{k}$,
and nontrivial irreducible on each other $\mathfrak{r}_i$. For each $0<i\leq m$, $[\mathfrak{r}_i,\mathfrak{s}]\subset\mathfrak{r}_i$ is $\mathrm{ad}(\mathfrak{s})$-invariant,
so the irreducibility of $\mathfrak{r}_i$ provides $\mathfrak{r}_i=[\mathfrak{r}_i,\mathfrak{s}]$.
Choose $U\in\mathfrak{r}_i\backslash\{0\}$, then we have $U\notin\mathfrak{k}$.
Because $U\in[\mathfrak{r}_i,\mathfrak{s}]\subset [\mathfrak{r},\mathfrak{g}]$, $\mathrm{ad}(U):\mathfrak{g}\rightarrow\mathfrak{g}$ is nilpotent
by Corollary 5.4.15 in \cite{HN2012}. On the other hand, $\mathrm{ad}(U)$ is semi simple by Lemma \ref{lemma-5}. So $\mathrm{ad}(U)\mathfrak=0$ on $\mathfrak{g}$. In particular, we
have $[U,\mathfrak{s}]=0$, which contradicts Claim B and ends the proof.
\end{proof}

\begin{proof}[Proof of Theorem \ref{cor-1}]
When $\rho$ is a positive constant function, $\rho F$ is obviously homogeneous.
Assuming that $\rho$ is non-constant and $\rho F$ is
homogeneous. Then there exist $x,x'\in M$ with $\rho(x_1)\neq\rho(x_2)$, and $f_1\in I_0(M,F)$, $f_2\in I_0(M,\rho F)$ with $f_1(x)=f_2(x)=x'$. Obviously $f_1\notin I_0(M,\rho F)$ and $f_2\notin I_0(M,F)$. So $I_0(M,F)\subsetneq I_0(M,F)\cup I_0(M,\rho F)\subset C_0(M,F)$, which contradicts Theorem \ref{thm-2} and ends the proof.
\end{proof}

{\bf Acknowledgement}. This paper is supported by Beijing Natural Science Foundation (1222003) and National Natural Science Foundation of China (12131012,
11821101). The author sincerely thank Shaoqiang Deng, Xiaobo Liu and Mengke Wu for helpful discussions.


\begin{thebibliography}{99}
\bibitem{Al1972} D.V. Alekseevskii, Groups of conformal transformations of Riemannian spaces,
Math. USSR Sbornik 18 (2) (1972), 285-301.

\bibitem{Bo1940} A. Borel, Some remarks about Lie groups transitive on spheres and tori, Bull. Amer. Math. Soc. 55 (1940), 580-587.

\bibitem{BCS2000} D. Bao, S.S. Chern and Z. Shen, An Introduction to Riemann-Finsler Geometry,
G.T.M. 200, Springer, New York, 2000.

\bibitem{De2012} S. Deng, Homogeneous Finsler spaces, Springer Monogr. Math., Springer - New
York (2012).

\bibitem{DH2007} S. Deng and Z. Hou, The group of isometries of a Finsler space, Pacific J. Math.
207 (1) (2002), 149-155.

\bibitem{DX2016} S. Deng and M. Xu, $(\alpha_1,\alpha_2)$-metrics and Clifford-Wolf homogeneity, J. Geom. Anal 26 (2016), 2282-2321.

\bibitem{Fe1996} J. Ferrand, The action of conformal transformations on a Riemannian manifold, Math. Ann. 304 (1996), 277-291.

\bibitem{Fe1980} J. Ferrand, Le groupe des automorphismes conformes d'une vari\'{e}t\'{e} de Finsler compacte, C. R. Acad. Sci., Paris, S\'{e}r. A-B 291 (1980), A209-A210.

\bibitem{GK1962} S.I. Goldberg and S. Kobayashi, The conformal transformation group of a
compact homogeneous Riemannian manifold, Bull. Amer. Math. Soc. 68 (1962), 378-381.

\bibitem{HN2012} J. Hilgert and K.H. Neeb, Structure and Geometry of Lie groups, Springer Monogr. Math., Springer - New
York (2012).

\bibitem{Kn1929} M. Knebelman, Conformal geometry of generalised metric spaces, Proc. Natl. Acad. Sci. USA 15 (1929), 376-379.

\bibitem{KN1963} S. Kobayashi and K. Nomize, Foundations of Differential Geometry I,
Interscience Publishers, New York, 1963.

\bibitem{Le1971} J. Lelong-Ferrand, Transformations conformes et quasi-conformes des vari\'{e}t\'{e}s riemanniennes compactes (d\`{e}monstration de la conjecture de A. Lichnerowicz),
    Acad. Roy. Belg. Cl. Sci. Mem. Coll. 8$^\circ$(2) 39 (5) (1971).

\bibitem{Li1964} A. Lichnerowicz, Sur les transformations conformes d'une vari\'{e}t\'{e} riemannienne compacte, Compt. Rend. 259 (1964), 697-700.

\bibitem{MRTZ2009} V.S. Matveev, H.B. Rademacher, M. Troyanov and A. Zeghib,
Finsler conformal Lichnerowicz-Obata Conjecture, Ann. Inst. Fourier, Grenoble 59 (3) (2009),
937-949.

\bibitem{MS1943} D. Montgomery and H. Samelson, Transformation groups of spheres, Ann. Math.
44 (1943), 454-470.

\bibitem{MS1939} S.B. Myers and N. Steenrod, The group of isometries of a Riemannian manifold, Ann. Math. 40 (1939), 400-416.

\bibitem{MT2012} V.S. Matveev and M. Troyanov, The Binet-Legendre metric in
Finsler geometry, Geom. Topo. 16 (2012), 2135-2170.

\bibitem{Ob1971} M. Obata, The conjectures on conformal transformations of Riemannian manifolds, J. Diff. Geom. 6 (1971), 247-258.

\bibitem{Xu2022} M. Xu, The Minkowski norm and Hessian isometry induced by an isoparametric foliation on the unit sphere, Sci. China Math. 65 (7) (2022), 1485-1516.

\bibitem{XM2022} M. Xu and V.S. Matveev, Proof of Laugwitz Conjecture and Landsberg Unicorn Conjecture for Minkowski norms with $SO(k)\times SO(n-k)$-symmetric, Canad. J. Math. 74 (5)
    (2022), 1486-1516.

\bibitem{YN1959} K. Yano and T. Nagano, Einstein spaces admitting a non-parameter group of conformal transformations, Ann. Math. 69 (1959), 451-461.

%

%
%
%
%
%
%
%
%
%
%
%
%
%
%
%
%
%
%
%
%
%
%
\end{thebibliography}
\end{document}